\documentclass[]{siamart171218}

\usepackage{latexsym,amsmath,amsfonts,amscd,amssymb,verbatim,array,mathdots}
%\smartqed  % flush right qed marks, e.g. at end of proof
%\usepackage{refcheck}

\newsiamthm{claim}{Claim}
\newsiamremark{remark}{\sc Remark}
\newsiamremark{question}{\sc Question}
\newsiamremark{example}{\sc Example}

\DeclareMathOperator{\gph}{gph}

\DeclareMathOperator{\Sol}{Sol}
\DeclareMathOperator{\OP}{OP}
\DeclareMathOperator{\PVI}{PVI}

\DeclareMathOperator{\PCP}{PCP}

\DeclareMathOperator{\Stat}{Stat}

\DeclareMathOperator{\D}{\mathcal{D}}
\DeclareMathOperator{\E}{\mathcal{E}}

\DeclareMathOperator{\Po}{\mathcal{P}}

\DeclareMathOperator{\rank}{rank}
\DeclareMathOperator{\R}{\mathbb{R}}
%% The amsthm package provides extended theorem environments
%% \usepackage{amsthm}

%% The lineno packages adds line numbers. Start line numbering with
%% \begin{linenumbers}, end it with \end{linenumbers}. Or switch it on
%% for the whole article with \linenumbers.
%% \usepackage{lineno}

%\journal{Linear Algebra and its Applications}

\title{Dimension in Polynomial Variational Inequalities}

\author{Vu Trung Hieu\thanks{LIP6, Sorbonne University, Paris, France (\email{trung-hieu.vu@lip6.fr, hieuvut@gmail.com}).}}

% Custom SIAM macro to insert headers
\headers{Dimension in Polynomial Variational Inequalities}{Vu Trung Hieu}

% Optional PDF information
\ifpdf
\hypersetup{
	pdftitle={Dimension in Polynomial Variational Inequalities},
	pdfauthor={Vu Trung Hieu}
}
%\fi

\begin{document}

\maketitle

\smallskip

\begin{abstract} The aim of the paper is twofold. Firstly, by using the constant rank level set theorem from differential geometry, we establish sharp upper bounds for the dimensions of the solution sets of polynomial variational inequalities under mild conditions. Secondly, a classification of polynomial variational inequalities based on dimensions of their solution sets is introduced and investigated. Several illustrative examples are provided.
\end{abstract}
\begin{keywords}
polynomial variational inequality, polynomial fractional optimization, semialgebraic set, solution set, dimension
\end{keywords}

% REQUIRED
\begin{AMS}
90C33, 14P10 
\end{AMS}
	
\section{Introduction} We consider the following variational inequality
$${\rm find} \  x\in K\ {\rm such\ that}\ \langle F(x),y-x\rangle\geq 0\ \forall y\in K,$$
where $K$ is a semialgebraic set in $\R^n$ and $F:\R^n\to\R^n$ is a polynomial map. The problem and its solution set are denoted by $\PVI(K,F)$ and $\Sol(K,F)$, respectively. 
This problem is an natural extension of the well-known linear complementarity problem, the linear variational inequality, 
the tensor complementarity problem, and the polynomial complementarity problem (see, e.g., \cite{CPS1992,Gowda16,LTY05,SongQi2015}) which have received a lot of attention from researchers.

Thanks to the Tarski-Seidenberg theorem \cite[Theorem 2.6]{Coste02}, $\Sol(K,F)$  set is semialgebraic, hence that it has finitely many connected components and each component is path-connected. Furthermore, the dimension of $\Sol(K,F)$ is well-defined. The present paper focuses on this topic.

Firstly, we show a sharp upper bound for $\dim(\Sol(K,F))$ provided that the problem satisfies the Abadie constraint qualification and the constant rank condition. Results for the finiteness of $\Sol(K,F)$ are obtained. Secondly, we show that stationary points of the polynomial fractional optimization problem \cite{TTK04}
$${\rm minimize} \ \ \frac{p(x)}{q(x)} \ \ {\rm subject \ to } \ \ x\in K,$$
where $p(x)$ and $q(x)$ are polynomials,  $q(x)>0$ for all $x\in K$,
is the solution set of a certainly polynomial variational inequality. Hence, an estimate for the dimension of these stationary points is established. 
Thirdly, based on dimensions of the solution sets, a classification of polynomial variational inequalities is introduced. We also discuss thickness of the classes.

The organization of the paper is as follows.  Section \ref{sec:Pre} gives a brief introduction to semialgebraic geometry and polynomial variational inequalities. Section \ref{sec:3} shows upper bounds for the dimension of solution sets. Finiteness of solution sets is discussed in Section \ref{sec:4}. Section \ref{sec:5} investigates stationary points in polynomial fractional optimization.  A classification of polynomial variational inequalities is studied in Section \ref{sec:6}. The last section gives some concluding remarks.

\section{Preliminaries}\label{sec:Pre}
In this section, we will recall some definitions, notations, and auxiliary results from semialgebraic geometry and polynomial  variational inequalities.

\subsection{Semialgebraic Sets}

Recall a subset in $\R^n$ is \textit{semialgebraic} \cite[Definition 2.1.4]{BCF98}, if it is the union of finitely
many subsets of the form
\begin{equation*}\label{basicsemi}
\big\{x\in \R^n\,:\,f_1(x)=\dots=f_\ell(x)=0,\ g_{\ell+1}(x)<0,\dots,g_m(x)<0\big\},
\end{equation*}
where $\ell,m$ are natural numbers, and $f_1,\dots, f_\ell, g_{\ell+1},\dots,g_m$ are polynomials with real coefficients. 

The semialgebraic property is preserved by taking finitely union, intersection, minus and taking closure of semialgebraic sets. The well-known Tarski-Seidenberg theorem \cite[Theorem 2.3]{Coste02} states that the image of a semialgebraic set under a linear projection is a semialgebraic set.

Let $S_1\subset \R^m$ and $S_2\subset \R^n$ be semialgebraic sets. A vector-valued map $G:S_1\to S_2$ is said
to be semialgebraic \cite[Definition 2.2.5]{BCF98}, if its graph
$$\gph(G) =\left\lbrace  (x,v)\in S_1\times S_2: v=G(x) \right\rbrace$$
is a semialgebraic subset in $\R^m\times\R^n$.

Let $S$ be a semialgebraic set of $\R^m$. Then there exists a decomposition of $S$ into a disjoint union of semialgebraic subsets \cite[Theorem 2.3.6]{BCF98} $$S=S_1 \cup \dots \cup S_s,$$ where each $S_i$ is semialgebraically diffeomorphic to $(0,1)^{d_i}$, $i\in [s]$.  
Here, let $(0,1)^{0}$ be a point,  $(0,1)^{d_i}\subset\R^{d_i}$ is the set of points $x=(x_1,\dots,x_{d_i})$ such that $x_j\in (0,1)$ for all $j=1,\dots,d_i$. The \textit{dimension} of $S$ is, by definition \cite[Proposition 3.15]{Coste02},
$$\dim(S):=\max\{d_1,...,d_s\}.$$		
The dimension is well-defined and not depends on the decomposition of $S$. We adopt the convention that $\dim(\emptyset):=-\infty$. If $S$ is nonempty and $\dim(S)=0$  then  $S$ has finitely many points.

Assume that $S\subset \R^m$ is a semialgebraic set and $G:S\to\R^n$  is a semialgebraic map. Theorem 3.18 in \cite{Coste02} says that
$\dim(G(S))\leq\dim(S)$. Let $S_1,...,S_k$ be semialgebraic sets in $\R^n$. Applying \cite[Proposition 2.8.5]{BCF98}, one has the following equality:
\begin{equation}\label{dimsum}
\dim(S_1\cup \dots\cup S_k)=\max\{\dim S_1,\dots,\dim S_k\}.
\end{equation}

Let $S_1,S_2$ are semialgebraic manifolds in $\R^m$ and $G:S_1\to S_2$ be a smooth semialgebraic map. Assume that the rank of the Jacobian of $G$ is $k$ in a neighborhood of the level set $G^{-1}(v)$, where $v\in S_2$ be given. The constant rank level set theorem \cite[Theorem 11.2]{Tu_2010} says that $G^{-1}(v)$ is a submanifold of $S_1$ and $\dim (G^{-1}(v))=m-k$.

\subsection{Polynomial Variational Inequalities}

Let $K$ be a nonempty semialgebraic closed convex subset in $\R^n$ and $F:\R^n\to\R^n$ be a polynomial map. The \textit{polynomial variational inequality} defined by $K$ and $F$ is the following problem: $$
{\rm find} \  x\in K\ {\rm such\ that}\ \langle
F(x),y-x\rangle\geq 0\ \forall y\in K,
$$
where $\langle x,y\rangle$ is the usual scalar product of $x, y$ in the Euclidean space $\R^n$. We will respectively write the problem and its solution set $\PVI(K,F)$ and $\Sol(K,F)$.

Note that $x$ solves $(\PVI)$ if and only if
$F(x)\in -N_K(x),$
where $N_K(x)$ is the normal cone  of $K$ at $x\in K$  which is defined by 
$$
N_K(x)=\{x^*\in \R^n:\langle x^*,y-x\rangle\leq 0 \ \forall y\in K\}.
$$
Clearly, if $x$ belongs to the interior of $K$, then $x\in\Sol(K,F)$ if and only if $F(x)=0$. When $K=\R^n$, $x$ solves $\PVI(K,F)$  if and only if $x$ is a zero point of the function $F$.

\textit{Throughout the work, we assume that $K$ given by finitely many convex polynomial functions $g_i(x)$, $i\in [m]:=\{1,\dots,m\},$ and finitely many affine functions $h_j(x),j\in [\ell],$ as follows}
\begin{equation*}\label{K_set}
K = \big\lbrace x \in \R^n:g_i(x) \leq 0, \ i\in [m], \ h_j(x) = 0, \ j \in [\ell]\big\rbrace.
\end{equation*}

To find the solution set of $\PVI(K,F)$, we will find the solutions on each pseudo-face of $K$.
For every index set $\alpha\subset [m]$, we associate that with the following \textit{pseudo-face}
$$
K_{\alpha}=\left\lbrace x\in \R^n:g_i(x) =0, \forall i\in\alpha, g_i(x) <0, \forall i\in [m]\setminus\alpha, h_j(x) = 0,  \forall j\in [\ell]\right\rbrace.
$$
All pseudo-faces establish a finite disjoint decomposition of $K$.
Therefore, we have 
\begin{equation}\label{K}
K=\bigcup_{\alpha\subset [m]} K_{\alpha}.
\end{equation}

Since $\Sol(K,F)$ is a subset of $K$, from the disjoint decomposition \eqref{K} we have the following equality: 
\begin{equation}\label{p_face}
\Sol(K,F)=\bigcup_{\alpha\subset [m]}\left[ \Sol(K,F)\cap K_{\alpha}\right]
\end{equation}
By applying the Tarski-Seidenberg theorem with quantifiers \cite[Theorem 2.6]{Coste02}, we see that the solution set $\Sol(K,F)$ is semialgebraic in $\R^n$. From \eqref{p_face} and \eqref{dimsum}, one concludes that
\begin{equation}\label{dimsum1}
\dim(\Sol(K,F))=\max_{\alpha\subset [m]}\{\dim(\Sol(K,F)\cap K_{\alpha})\}.
\end{equation}

The Bouligand-Severi tangent cone (see, e.g., \cite[p.~15]{FaPa03}) of  $K$ at $x \in K$, denoted by
$T_K(x)$, consists of the vectors $v \in\R^n$, called the tangent vectors to $K$ at $x$, for which there exist a sequence of vectors $\{y^k\}\subset K$ and a sequence of positive scalars $\{t^k\}$ such that
$$\lim_{k\to\infty}y^k=x, \ \; \lim_{k\to\infty}t^k=0,\; \text{ and } \lim_{k\to\infty}\dfrac{y^k-x}{t^k}=v.$$
The linearization cone (see, e.g., \cite[p.~17]{FaPa03}) of $K$ at $x$ is defined by
$$L_K(x)=\big\lbrace v\in\R^n:\langle\nabla g_i(x),v\rangle \leq 0, i\in I(x), \langle\nabla h_j(x),v\rangle=0,j\in [\ell]\big\rbrace$$
with $I(x) := \{i\in [m] : g_i(x) = 0\}$ denoting the active index set at $x$. One says that $K$ satifies \textit{the Abadie constraint qualification} (ACQ) \cite[p.~17]{FaPa03} at  $x\in K$ if $L_{K}(x)=T_{K}(x)$. If the ACQ holds at every point of $K$, then the Karush-Kuhn-Tucker conditions can be applied to variational inequalities with the constraint set $K$ \cite[Proposition 1.3.4]{FaPa03}, i.e., $x\in\Sol(K,F)$ if and only if there exist $\lambda\in \R^{m}$ and $\mu\in \R^{\ell}$ such that 
\begin{equation}\label{KKT}
\begin{array}{c}
F(x)+\displaystyle\sum_{i\in [m]}\lambda_i\nabla g_i(x)+\displaystyle\sum_{j\in [\ell]}\mu_i\nabla h_j(x)=0,\\ 
\lambda^T g(x)=0,\ \lambda\geq 0, \; g(x) \leq 0,\ h(x)=0.
\end{array}	
\end{equation}

One says that $K$ satisfies the \textit{linearly independent constraint qualification}, is written LICQ for brevity, if the gradient vectors $$\{\nabla g_i(x), \nabla h_j(x), i\in I(x),j\in [\ell]\}$$ are linearly independent, for all point $x\in K$. If the LICQ holds on $K$, then the ACQ (see, e.g. \cite[p.~17]{FaPa03}) also holds on $K$.

\section{Sharp Upper Bounds for Dimensions}\label{sec:3}

This section gives sharp upper bounds for the dimensions of the solution sets of polynomial variational inequalities provided that the problems satisfy the constant rank condition. Consequently, some special cases are considered.

Based on the constant-rank level set theorem \cite[Theorem 11.2]{Tu_2010} from differential geometry, we first prove the following lemma which is an important tool to prove our main theorems.

\begin{lemma}\label{lem:dim} 
	Assume that $P:\R^m\to\R^{m}$ is a polynomial map, $v\in\R^{m}$, and $P^{-1}(v)\neq\emptyset$. If the rank of the Jacobian of $P$, denoted by $\rank(DP)$, is constant on $\R^m$, then $P^{-1}(v)$ is a semialgebraic set and its dimension satisfies the following equality:	
	\begin{equation}\label{dimP_1}
	\dim(P^{-1}(v))=m-\rank(DP).
	\end{equation}
\end{lemma}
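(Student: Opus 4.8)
The plan is to handle the two assertions separately: the semialgebraicity of $P^{-1}(v)$ is elementary, while the dimension formula follows directly from the constant rank level set theorem recalled in Section~\ref{sec:Pre}.

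For semialgebraicity, I would write $P=(P_1,\dots,P_m)$ with each component $P_i$ a real polynomial, so that
\[
P^{-1}(v)=\big\{x\in\R^m : P_i(x)=v_i,\ i\in[m]\big\}
\]
is the common zero set of the polynomials $P_i-v_i$, $i\in[m]$. This is an algebraic set, hence semialgebraic by definition, and requires no further argument.

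For the dimension formula, I would apply the constant rank level set theorem with $S_1=S_2=\R^m$ and $G=P$. Since $P$ is polynomial it is smooth and semialgebraic (its graph $\gph(P)$ is cut out by the polynomial equations $v=P(x)$), and $\R^m$ is a smooth semialgebraic manifold. Writing $k=\rank(DP)$, the standing hypothesis that this rank is constant on all of $\R^m$ makes it constant, equal to $k$, on any neighborhood of the level set $P^{-1}(v)$; here the assumption $P^{-1}(v)\neq\emptyset$ guarantees that the theorem is applied to a genuine (nonempty) manifold. The theorem then yields that $P^{-1}(v)$ is a smooth submanifold of $\R^m$ of dimension $m-k$, which is precisely the right-hand side of \eqref{dimP_1}.

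The one point demanding care is that the constant rank level set theorem delivers the dimension of $P^{-1}(v)$ as a smooth manifold, whereas throughout the paper $\dim$ denotes the semialgebraic dimension. I would bridge this gap with the standard fact that for a semialgebraic set which is simultaneously a smooth submanifold of $\R^m$, the manifold dimension coincides with the semialgebraic dimension: a $d$-dimensional smooth manifold locally admits a chart semialgebraically diffeomorphic to an open subset of $\R^d$, producing a cell of dimension $d$ in a semialgebraic decomposition, while no cell of larger dimension can arise. This identification is the only genuine—and minor—obstacle; granting it, the equality $\dim(P^{-1}(v))=m-\rank(DP)$ in \eqref{dimP_1} is immediate.
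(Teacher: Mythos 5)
Your proof is correct and follows essentially the same route as the paper: semialgebraicity of $P^{-1}(v)$ from its polynomial defining equations, and the dimension formula from the constant rank level set theorem \cite[Theorem 11.2]{Tu_2010} applied on all of $\R^m$. The one place you go beyond the paper is in explicitly bridging manifold dimension and semialgebraic dimension---a standard identification (cf.\ \cite[Proposition 2.8.14]{BCF98}) that the paper's proof leaves implicit---so this is a welcome refinement rather than a divergence.
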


\begin{proof} Because $P$ is a polynomial map, the level set $P^{-1}(v)$ is a semialgebraic set. Since $P$ is smooth and the rank of $DP$ is constant on $\R^m$, the equation \eqref{dimP_1} follows the constant rank level set theorem \cite[Theorem 11.2]{Tu_2010}.	
\end{proof}

Let $\alpha\subset [m]$ be given. Consider the function $\Phi_{\alpha}$ from
$\R^{n}\times \R^{|\alpha|}\times \R^{\ell}$ to $\R^{n+|\alpha|+\ell}$ defined by $$\Phi_{\alpha}(x,\lambda_\alpha,\mu)=\Big( F(x)+\displaystyle\sum_{i\in \alpha}\lambda_i\nabla g_i(x)+\displaystyle\sum_{j\in [\ell]}\mu_i\nabla h_j(x),\, g_\alpha(x),\, h(x)\Big) ^T,$$
where  $g_{\alpha}=(g_{i})_{i\in\alpha}$, $\lambda_{\alpha}=(\lambda_{i})_{i\in\alpha}$. 	Clearly, $\Phi_{\alpha}$ is a polynomial map. The zero set $\Phi_{\alpha}^{-1}(0)$  is semialgebraic in $\R^{n+|\alpha|}$. The Jacobian of $\Phi_\alpha$ is determined as follows
\begin{equation}\label{Dphi}
D\Phi_\alpha(x,\lambda_{\alpha})=\begin{bmatrix}
DF+\displaystyle\sum_{i\in \alpha}\lambda_i\nabla^2 g_i(x) &  \nabla g_{\alpha}(x) & \nabla h(x)\\ 
\nabla g_{\alpha}(x)^T & 0 & 0\\
\nabla h(x)^T & 0 & 0
\end{bmatrix},
\end{equation}
where $DF$ is the Jacobian of $F$, and $\nabla^2 g_i(x)$ is the Hessian of $g_i(x)$.

\begin{remark}
Since $h(x)$ is affine, the gradient $\nabla h(x)$ is a constant vector in $\R^{\ell}$. We must emphasize that the polynomial matrix $D\Phi_\alpha$ does not depends on $\mu$. Furthermore, if $g_i, i\in\alpha,$ are affine then $D\Phi_\alpha$ also does not depends on $\lambda_{\alpha}$.
\end{remark}

The following theorem shows a upper bound for the dimension of the solution set of $\PVI(K,F)$.

\begin{theorem}\label{thm:dim1} Assume that the ACQ holds on $K$. If the rank of $D\Phi_\alpha$ is constant on $\R^{n}\times \R^{|\alpha|}$, for every $\alpha \subset [m]$, then the following inequality holds:
	\begin{equation}\label{dim_Sol}
	\dim(\Sol(K,F))\leq\max_{\alpha\subset [m]}\left\lbrace \min\{\dim(K_\alpha), n+|\alpha|+\ell-\rank(D\Phi_\alpha)\}\right\rbrace.
	\end{equation}
\end{theorem}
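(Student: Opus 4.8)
The plan is to bound each term $\dim(\Sol(K,F)\cap K_\alpha)$ appearing in the decomposition \eqref{dimsum1} separately, and then take the maximum. Fix an index set $\alpha\subset[m]$. The first step is to realize $\Sol(K,F)\cap K_\alpha$ as a subset of the image of the zero set $\Phi_\alpha^{-1}(0)$ under the coordinate projection $\pi(x,\lambda_\alpha,\mu)=x$. Since the ACQ holds on $K$, the KKT characterization \eqref{KKT} applies: any $x\in\Sol(K,F)$ admits multipliers $\lambda\geq 0$ and $\mu\in\R^\ell$ satisfying the stationarity equation and the complementarity condition $\lambda^T g(x)=0$. If in addition $x\in K_\alpha$, then $g_i(x)<0$ for every $i\in[m]\setminus\alpha$, so complementarity forces $\lambda_i=0$ for those indices. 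The stationarity sum therefore collapses to a sum over $\alpha$ alone, and together with $g_\alpha(x)=0$ and $h(x)=0$ this says precisely that $(x,\lambda_\alpha,\mu)\in\Phi_\alpha^{-1}(0)$. Hence $\Sol(K,F)\cap K_\alpha\subseteq\pi(\Phi_\alpha^{-1}(0))$.

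The second step is to compute $\dim(\Phi_\alpha^{-1}(0))$. By hypothesis $\rank(D\Phi_\alpha)$ is constant on $\R^n\times\R^{|\alpha|}$, and by the Remark the matrix $D\Phi_\alpha$ does not depend on $\mu$; consequently the rank is constant on the entire domain $\R^{n+|\alpha|+\ell}$. Observing that $\Phi_\alpha$ is a polynomial self-map of $\R^{n+|\alpha|+\ell}$, Lemma \ref{lem:dim} applies directly and yields $\dim(\Phi_\alpha^{-1}(0))=n+|\alpha|+\ell-\rank(D\Phi_\alpha)$.

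The third step combines these facts with two elementary dimension estimates. The projection $\pi$ is a semialgebraic map, so the inequality $\dim(G(S))\leq\dim(S)$ (Theorem 3.18 of \cite{Coste02}) gives $\dim(\Sol(K,F)\cap K_\alpha)\leq\dim(\Phi_\alpha^{-1}(0))=n+|\alpha|+\ell-\rank(D\Phi_\alpha)$. On the other hand, the trivial inclusion $\Sol(K,F)\cap K_\alpha\subseteq K_\alpha$ gives $\dim(\Sol(K,F)\cap K_\alpha)\leq\dim(K_\alpha)$. Taking the minimum of the two bounds, then the maximum over $\alpha\subset[m]$, and invoking \eqref{dimsum1}, produces exactly \eqref{dim_Sol}.

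I expect the main obstacle to lie in the first step: one must argue carefully that the complementarity condition, when restricted to the pseudo-face $K_\alpha$, annihilates the multipliers associated with the inactive constraints, so that the global KKT system reduces exactly to the fibre $\Phi_\alpha^{-1}(0)$ of the reduced map. The remaining steps are routine once Lemma \ref{lem:dim} is available; the only further point requiring a moment's care is verifying that the constant-rank hypothesis, which is assumed only on $\R^n\times\R^{|\alpha|}$, extends to the full domain via the Remark so that Lemma \ref{lem:dim} is genuinely applicable.
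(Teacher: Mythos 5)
Your proof is correct and takes essentially the same route as the paper's: bound each $\dim(\Sol(K,F)\cap K_\alpha)$ via the inclusion $\Sol(K,F)\cap K_\alpha\subseteq\pi(\Phi_\alpha^{-1}(0))$ coming from the KKT system \eqref{KKT}, compute $\dim(\Phi_\alpha^{-1}(0))=n+|\alpha|+\ell-\rank(D\Phi_\alpha)$ by Lemma \ref{lem:dim}, use $\dim(G(S))\leq\dim(S)$ for the semialgebraic projection together with the trivial bound by $\dim(K_\alpha)$, and combine via \eqref{dimsum1}. If anything, you spell out two points the paper leaves implicit---that complementarity forces $\lambda_i=0$ for the inactive indices $i\in[m]\setminus\alpha$ on the pseudo-face $K_\alpha$, and that the $\mu$-independence of $D\Phi_\alpha$ is what lets the constant-rank hypothesis on $\R^n\times\R^{|\alpha|}$ extend to the full domain $\R^{n+|\alpha|+\ell}$ so that Lemma \ref{lem:dim} applies.
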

\begin{proof} Let $\alpha\subset [n]$ be given.	We will prove the following inequality:
	\begin{equation}\label{dim_al}
	\dim(\Sol(K,F)\cap K_{\alpha})\leq\min\left\lbrace \dim(K_\alpha), n+|\alpha|+\ell-\rank(D\Phi_\alpha) \right\rbrace.
	\end{equation}
Indeed, since $\Sol(K,F)\cap K_{\alpha}\subset K_{\alpha}$, one has $\dim(\Sol(K,F)\cap K_{\alpha})\leq \dim(K_\alpha)$. So, we need only to show the fact that
\begin{equation}\label{f:dim1}
\dim(\Sol(K,F)\cap K_{\alpha})\leq n+|\alpha|+\ell-\rank(D\Phi_\alpha).
\end{equation}

Remind that the set of zero points $\Phi_{\alpha}^{-1}(0)$ 
is semialgebraic. Since the rank of matrix $D\Phi_\alpha(x,\lambda_{\alpha})$ is constant on $\R^{n}\times \R^{|\alpha|}$, by applying Lemma \ref{lem:dim}  for the map $\Phi_\alpha$, we conclude that the dimension of $\Phi_{\alpha}^{-1}(0)$ is $n+|\alpha|+\ell-\rank(D\Phi_\alpha)$. From \eqref{KKT} one has
$$\Sol(K,F)\cap K_{\alpha}\subset\pi(\Phi_{\alpha}^{-1}(0)),$$
where $\pi$ is the projection $\R^{n+|\alpha|} \to \R^n$  defined by $\pi(x,\lambda_{\alpha}) = x$. The Tarski-Seidenberg theorem says that $\pi((\Phi_{\alpha})^{-1}(0))$ is semialgebraic. It is follows from \cite[Theorem 3.18]{Coste02} that 
$$\dim(\Sol(K,F)\cap K_{\alpha})\leq \dim(\pi(\Phi_{\alpha}^{-1}(0))) \leq \dim(\Phi_{\alpha}^{-1}(0)).$$
Therefore, \eqref{f:dim1} is obtained.

Note that \eqref{dim_al} is true for all $\alpha\subset [m]$. Substituting \eqref{dim_al} into \eqref{dimsum1}, we obtain
 \eqref{dim_Sol}. The proof is complete.
\end{proof}

\begin{example}
Consider the variational inequality $\PVI(K,F)$ given by 
	$$F_1(x_1,x_2)= F_2(x_1,x_2)=x_1^{3}+x_1-x_2,\ K=\{(x_1,x_2): x_1+x_2\leq 0\},$$
	The Jacobian of $F$ is defined as follows:
	$$D_xF=\begin{bmatrix}
	3x_1^2+1& -1 \\ 
3x_1^2+1& -1
	\end{bmatrix}.$$
One has $K_{\emptyset}=\{(x_1,x_2): x_1+x_2< 0\}$ and $D\Phi_{\emptyset}=DF$. Hence that, $\rank(DF)=1$ for all $(x_1,x_2)\in\R^2$. Because of $K_{\{1\}}=\{(x_1,x_2): x_1+x_2= 0\}$ and $$D\Phi_{\{1\}}=\begin{bmatrix}
	3x_1^2+1& -1 &1\\ 
	3x_1^2+1& -1 &1 \\
1 & 1 & 0
\end{bmatrix},$$
we assert that $\rank(D\Phi_{\{1\}})=2$ for all $(x_1,x_2)\in\R^2$ and $\lambda\in\R$.
The problem $\PVI(K,F)$ satisfies the assumptions in Theorem \ref{thm:dim1}. It follows that the dimension of $\Sol(K,F)$ is not greater than one. Besides, an easy computation shows that $$\Sol(K,F)=\{(x_1,x_2)\in\R^2:x_2=x_1^3+x_1, x_1+x_2\leq 0\},$$ hence that $\dim(\Sol(K,F))=1$.
\end{example}

The following theorem says that the inequality \eqref{dim_Sol} becomes an equality if the constraint set is the space $\R^n$, i.e. $\PVI(K,F)$ is an unconstrained polynomial variational inequality.

\begin{theorem}%\label{cor:un}  
Consider the case that $K=\R^n$. If $\rank(DF)$ is constant on $\R^n$ and $\Sol(\R^n,F)$ is nonempty, then $$\dim(\Sol(\R^n,F))=n-\rank(DF).$$
\end{theorem}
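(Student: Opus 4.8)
The plan is to reduce this statement to a direct application of Lemma~\ref{lem:dim}. The essential observation, already recorded in the preliminaries, is that when $K=\R^n$ a point $x$ solves $\PVI(\R^n,F)$ if and only if $F(x)=0$. Hence the solution set coincides with the zero level set of $F$, namely
\begin{equation*}
\Sol(\R^n,F)=F^{-1}(0).
\end{equation*}
First I would make this identification explicit, since it converts a statement about variational inequalities into a statement about a polynomial level set, which is exactly the setting of the lemma.

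Next I would invoke Lemma~\ref{lem:dim} with the polynomial map $P=F\colon\R^n\to\R^n$ (so $m=n$) and the point $v=0$. The hypotheses of the lemma are met: the level set $F^{-1}(0)$ is nonempty because $\Sol(\R^n,F)\neq\emptyset$ by assumption, and $\rank(DF)$ is constant on $\R^n$ by hypothesis. The lemma then yields directly that $F^{-1}(0)$ is semialgebraic and
\begin{equation*}
\dim(F^{-1}(0))=n-\rank(DF).
\end{equation*}
Combining this with the identification $\Sol(\R^n,F)=F^{-1}(0)$ gives the claimed equality, completing the argument.

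I do not anticipate any genuine obstacle: once the solution set is recognized as $F^{-1}(0)$, the result is an immediate corollary of Lemma~\ref{lem:dim}, where the crucial point is that the lemma provides an exact dimension formula rather than a mere inequality. It is worth noting that the upper bound half of the equality is also recoverable from Theorem~\ref{thm:dim1} by specializing to $m=\ell=0$, where the only pseudo-face is $K_\emptyset=\R^n$, the map $\Phi_\emptyset$ reduces to $F$, and the bound reads $\min\{n,\,n-\rank(DF)\}=n-\rank(DF)$. The content of the present theorem is precisely that this bound is attained, and the exactness in Lemma~\ref{lem:dim} is what upgrades the inequality of Theorem~\ref{thm:dim1} to an equality here.
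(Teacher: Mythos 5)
Your proposal is correct and follows essentially the same route as the paper's own proof: identify $\Sol(\R^n,F)=F^{-1}(0)$ via the unique pseudo-face $K_\emptyset=\R^n$ and apply Lemma~\ref{lem:dim} with $P=F$, $v=0$. Your closing remark connecting the upper bound to Theorem~\ref{thm:dim1} is a harmless extra observation not present in the paper, but the core argument is the same.
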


\begin{proof} Suppose that $\rank(DF)$ is constant on $\R^n$ and $\Sol(\R^n,F)\neq \emptyset$. Since $K=\R^n$, the constraint set has a unique pseudo-face $K_{\emptyset}=\R^n$. Thus, one has $\Sol(\R^n,F)=F^{-1}(0)$. Applying Lemma \ref{lem:dim}, the desired equality is obtained.
\end{proof}

When the degree of all components of $F$ and $g$ is small enough, the constant rank condition always is true. The following corollary considers the case that $F$ and $g$ are affine maps.

\begin{corollary}%\label{cor:dim1} 
	Assume that the map $F$ is affine, i.e. $F(x)=Mx+q$, where $M\in\R^{n\times n}$ and $q\in\R^n$. If
 $K$ is polyheral convex, i.e. $K$ given by 
\begin{equation}\label{K_polyh}
K=\{x\in \R^n: Ax\leq b\},
\end{equation}
where $A\in\R^{m\times n}$ and $b\in\R^n$, then \eqref{dim_Sol} holds. 
\end{corollary}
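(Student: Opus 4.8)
The plan is to obtain the corollary as a direct specialization of Theorem~\ref{thm:dim1}, so the entire task reduces to checking that its two hypotheses---the ACQ on $K$ and the constant rank of $D\Phi_\alpha$---are automatically satisfied in the affine, polyhedral setting. Since $K=\{x\in\R^n:Ax\leq b\}$ carries no equality constraints, we have $\ell=0$, and each inequality constraint is the affine function $g_i(x)=a_i^T x-b_i$, where $a_i^T$ denotes the $i$-th row of $A$, so that $\nabla g_i=a_i$ and $\nabla^2 g_i=0$.

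First I would verify the Abadie constraint qualification. Here one must be slightly careful: a polyhedron need not satisfy the LICQ (for instance, at a vertex where more than $n$ facets meet, the active gradients are linearly dependent), so the implication ``LICQ $\Rightarrow$ ACQ'' recalled in Section~\ref{sec:Pre} does not apply here. Instead I would invoke the classical fact that a system of affine inequalities always satisfies the ACQ, i.e.\ $T_K(x)=L_K(x)$ at every $x\in K$ whenever $K$ is polyhedral (this is classical; see, e.g., \cite{FaPa03}). This secures the first hypothesis on all of $K$.

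Second I would show that $D\Phi_\alpha$ is in fact a \emph{constant} matrix, whence its rank is trivially constant on $\R^n\times\R^{|\alpha|}$. Since $F(x)=Mx+q$ is affine, $DF=M$ is constant; since each $g_i$ is affine, every Hessian $\nabla^2 g_i$ vanishes and each gradient $\nabla g_i=a_i$ is constant. Substituting these facts into the block formula \eqref{Dphi}, together with $\ell=0$, collapses it to
\begin{equation*}
D\Phi_\alpha=\begin{bmatrix} M & A_\alpha^T\\ A_\alpha & 0 \end{bmatrix},
\end{equation*}
where $A_\alpha$ is the submatrix of $A$ formed by the rows indexed by $\alpha$. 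This matrix depends on neither $x$ nor $\lambda_\alpha$, so $\rank(D\Phi_\alpha)$ is constant on $\R^n\times\R^{|\alpha|}$ for every $\alpha\subset[m]$, as Theorem~\ref{thm:dim1} demands.

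With both hypotheses in place, Theorem~\ref{thm:dim1} applies verbatim and delivers \eqref{dim_Sol}. The only genuinely nontrivial point is the first step: one cannot route through the LICQ and must instead appeal directly to the polyhedrality of $K$ to obtain the ACQ. The constant-rank condition, by contrast, becomes essentially vacuous once both $F$ and the $g_i$ are affine, since the whole Jacobian $D\Phi_\alpha$ is then a fixed matrix.
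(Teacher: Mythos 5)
Your proof is correct and follows essentially the same route as the paper: specialize Theorem~\ref{thm:dim1} by observing that, with $F(x)=Mx+q$ and all $g_i$ affine, the matrix $D\Phi_\alpha$ in \eqref{Dphi} is constant (independent of both $x$ and $\lambda_\alpha$), so the constant rank hypothesis holds trivially. The one place you go beyond the paper is the ACQ: the paper's own proof never checks this hypothesis at all, whereas you verify it explicitly, and you are right that one cannot route through the LICQ (which can fail at a degenerate vertex of a polyhedron where more than $n$ facets meet) but must instead invoke the classical fact that for a polyhedral set $T_K(x)=L_K(x)$ at every $x\in K$. Your identification of $D\Phi_\alpha$ as the constant block matrix with blocks $M$, $A_\alpha^T$, $A_\alpha$, $0$ is also consistent with \eqref{Dphi}, so if anything your write-up is more complete than the paper's own argument.
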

\begin{proof} Suppose that $F(x)=Mx+q$. Its Jacobian 
	is $M$ for every $x\in\R^n$. Because $g(x)$ is affine, the gradients $\nabla g_i(x)$ are constant vectors in $\R^{n}$. Then $D\Phi_{\alpha}$ does depend on neither $x$ nor $\lambda_{\alpha}$, hence that $\PVI(K,F)$ satisfies the constant rank condition. Applying Theorem \ref{thm:dim1}, we obtain the inequality \eqref{dim_Sol}.
\end{proof}

\section{Finiteness of Solution Sets}\label{sec:4} In this section, we discuss the finiteness of the solution set of a polynomial variational inequality. Remind that the solution set has finitely many points if and only if its dimension is not greater then one.

\begin{theorem}\label{thm:fini} Assume that the ACQ holds on $K$. If the rank of $D\Phi_\alpha$ is $n+|\alpha|+\ell$ on $\R^n\times\R^{|\alpha|}$, for all $\alpha \subset [m]$, then 	$\Sol(K,F)$ has finitely many points.
\end{theorem}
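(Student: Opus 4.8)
The plan is to specialize Theorem~\ref{thm:dim1} to the situation in which each $D\Phi_\alpha$ has full rank $n+|\alpha|+\ell$, and then show that the dimension bound \eqref{dim_Sol} collapses to zero, which forces finiteness. First I would fix an arbitrary index set $\alpha\subset[m]$ and examine the inner term of the maximum in \eqref{dim_Sol}, namely
\begin{equation*}
\min\{\dim(K_\alpha),\,n+|\alpha|+\ell-\rank(D\Phi_\alpha)\}.
\end{equation*}
By hypothesis $\rank(D\Phi_\alpha)=n+|\alpha|+\ell$ on all of $\R^n\times\R^{|\alpha|}$, so the second argument of this minimum is exactly $n+|\alpha|+\ell-(n+|\alpha|+\ell)=0$. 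Since $\dim(K_\alpha)\ge 0$ whenever $K_\alpha$ is nonempty, the minimum equals $0$ for every $\alpha$ (and if some $K_\alpha$ is empty the corresponding solution-set piece is empty and contributes $-\infty$, which only helps). Hence each summand in the maximum is at most $0$.

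Next I would invoke Theorem~\ref{thm:dim1} directly. The constant-rank hypothesis there is satisfied because a rank that is identically $n+|\alpha|+\ell$ is in particular constant, so the theorem applies and gives
\begin{equation*}
\dim(\Sol(K,F))\le\max_{\alpha\subset[m]}\{0\}=0.
\end{equation*}
Combining this with the paper's convention that $\dim(\emptyset)=-\infty$, we conclude $\dim(\Sol(K,F))\le 0$. By the remark recalled at the start of Section~\ref{sec:4} (equivalently, by the statement in Section~\ref{sec:Pre} that a nonempty semialgebraic set of dimension $0$ has finitely many points), the set $\Sol(K,F)$ is either empty or a finite collection of points; in both cases it has finitely many points, which is the desired conclusion.

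The argument is essentially a corollary, so there is no deep obstacle; the only point requiring a little care is the treatment of empty pseudo-faces and the degenerate case $\Sol(K,F)=\emptyset$, so that the maximum over $\alpha$ is genuinely bounded by $0$ rather than by some spurious $-\infty$ or undefined quantity. I would make sure the $\min$/$\max$ bookkeeping is consistent with the $\dim(\emptyset)=-\infty$ convention, noting that $\min\{-\infty,0\}=-\infty$ poses no problem since we are taking an upper bound. Everything else follows immediately from Theorem~\ref{thm:dim1} and the standard fact that zero-dimensional semialgebraic sets are finite.
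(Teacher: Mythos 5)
Your proposal is correct and follows exactly the paper's own route: the paper likewise observes that full rank $n+|\alpha|+\ell$ makes the bound of Theorem~\ref{thm:dim1} collapse to $\dim(\Sol(K,F))\le 0$, and concludes finiteness from the zero-dimensional case. Your extra bookkeeping about empty pseudo-faces and the $\dim(\emptyset)=-\infty$ convention is sound but not something the paper's (one-line) proof bothers to spell out.
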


\begin{proof}
Suppose that $\rank(D\Phi_\alpha)=n+|\alpha|+\ell$, for every $\alpha \subset [m]$. Applying Theorem \ref{thm:dim1} we obtain $\dim(\Sol(K,F))\leq 0$. This means that $\Sol(K,F)$ has finitely many points.
\end{proof}

Now we consider a simpler case in which the constraint set is the nonnegative orthant of $\R^n$, i.e. $K=\{x:-x_1\leq 0,\dots,-x_n\leq 0\}=\R^n_+$. The problem $\PVI(\R^n_+,F)$ becomes a \textit{polynomial complementarity problem}, denoted by $\PCP(F)$. 
We see that the ACQ holds on $K$ and the matrix $D\Phi_{\alpha}$ in \eqref{Dphi} does not depend on $\lambda_{\alpha}$. For the finiteness of solution sets, the constant rank condition in Theorem \ref{thm:fini} can be replaced by a simpler one.

From \eqref{Dphi}, now the Jacobian 
of $\Phi_\alpha$ is determined as follows
\begin{equation*}\label{Dphi1}
D\Phi_\alpha=\begin{bmatrix}
DF &  C_{\alpha}\\ 
C^T_{\alpha} & 0
\end{bmatrix},
\end{equation*}
where $C_{\alpha}=(c_{ij})\in\R^{n\times |\alpha|}$ given by 	\begin{equation}\label{cij}
c_{ij}=\left\{\begin{array}{cl}
1  & \text{ if } \ i=j,\; j\in\alpha,\; i\in [n], \\
0 & \text{ if } \ i\neq j,\; j\in\alpha,\; i\in [n] .
\end{array}\right.
\end{equation}
We emphasize that the polynomial matrix $D\Phi_\alpha$ does not depends on both $\lambda_{\alpha}$ and $\mu$. Because the rank of $C_{\alpha}$ is $|\alpha|$, $\rank(D\Phi_\alpha(x))\geq |\alpha|$ for all $x\in\R^n$.

\begin{remark}\label{rmk:nonsgl} Let $A\in\R^{n\times n}$ be nonsingular, i.e. the kernel of the linear map $A:\R^n\to\R^n$, given by $x\mapsto Ax$, is trivial. We assert that the $|\alpha|\times |\alpha|$-matrix $C_{\alpha}^TA^{-1}C_{\alpha}$ also is nonsingular. Indeed, suppose that there exists $v\in\R^{|\alpha|}$ with $v\neq 0$ such that $C_{\alpha}^TA^{-1}C_{\alpha}(v)=0$. It follows that
	$$0 = \left\langle C_{\alpha}^TA^{-1}C_{\alpha}(v), v\right\rangle= v^TC_{\alpha}^TA^{-1}C_{\alpha}v=(C_{\alpha}v)^TA^{-1}(C_{\alpha}v).$$
	Since $v\neq 0$, from \eqref{cij}, we see that the vector $C_{\alpha}v$ is nontrivial. By the nonsingularity of $A^{-1}$, one has $(C_{\alpha}v)^TA^{-1}(C_{\alpha}v)\neq 0$. It is a contradiction. The kernel of $C_{\alpha}^TA^{-1}C_{\alpha}$ is trivial, hence that this matrix is nonsingular.
\end{remark}

\begin{theorem}\label{thm:dim2} Consider the polynomial complementarity problem $\PCP(F)$. If $\rank (DF)=n$ on $\R^{n}$, then the solution set has finitely many points.
\end{theorem}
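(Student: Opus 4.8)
The plan is to reduce the statement to the finiteness criterion already established in Theorem \ref{thm:fini}. For $\PCP(F)$ the constraint set is $K=\R^n_+$, described by the $n$ affine functions $g_i(x)=-x_i\le 0$, $i\in [n]$, with no equality constraints, so here $m=n$ and $\ell=0$. As noted in the text preceding Remark \ref{rmk:nonsgl}, the ACQ holds on all of $K$, so the hypothesis feeding into Theorem \ref{thm:fini} is available. In view of that theorem it then suffices to show that $\rank(D\Phi_\alpha)=n+|\alpha|$ (recall $\ell=0$) on $\R^n\times\R^{|\alpha|}$ for every index set $\alpha\subset [n]$.

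First I would record the structure of $D\Phi_\alpha$ in the complementarity setting: by the reduction preceding Remark \ref{rmk:nonsgl}, it is the $(n+|\alpha|)\times(n+|\alpha|)$ block matrix
\begin{equation*}
D\Phi_\alpha(x)=\begin{bmatrix} DF(x) & C_\alpha \\ C_\alpha^T & 0 \end{bmatrix},
\end{equation*}
which is independent of $\lambda_\alpha$ and $\mu$, and whose off-diagonal block $C_\alpha$ has full column rank $|\alpha|$ by \eqref{cij}. The assumption $\rank(DF)=n$ means the top-left block is nonsingular at every $x\in\R^n$, so the natural tool is the Schur complement with respect to $DF(x)$.

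The key step is the Schur-complement computation. Multiplying $D\Phi_\alpha$ on the left and right by the appropriate invertible block-triangular matrices (block Gaussian elimination) reduces it, without changing the rank, to the block-diagonal matrix $\mathrm{diag}\big(DF(x),\,-C_\alpha^T (DF(x))^{-1}C_\alpha\big)$. Hence $\rank(D\Phi_\alpha)=\rank(DF)+\rank\big(C_\alpha^T (DF)^{-1}C_\alpha\big)=n+\rank\big(C_\alpha^T (DF)^{-1}C_\alpha\big)$, and it remains only to check that the $|\alpha|\times|\alpha|$ matrix $C_\alpha^T (DF)^{-1}C_\alpha$ is nonsingular, which is exactly Remark \ref{rmk:nonsgl} applied with $A=DF(x)$. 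Granting this, $\rank(D\Phi_\alpha)=n+|\alpha|$ for every $\alpha\subset[n]$, and Theorem \ref{thm:fini} (with $\ell=0$) yields that $\Sol(\R^n_+,F)$ has finitely many points.

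The step I expect to be the main obstacle, and the one deserving the closest scrutiny, is the nonsingularity of the compressed inverse $C_\alpha^T (DF)^{-1}C_\alpha$ for every $\alpha$. The hypothesis $\rank(DF)=n$ controls $DF$ itself, whereas the finiteness is really governed by the behaviour of $(DF)^{-1}$ restricted to the coordinate subspace spanned by $\{e_i:i\in\alpha\}$, i.e.\ by the principal submatrix of $(DF)^{-1}$ indexed by $\alpha$. Such a compression need not inherit nonsingularity from an arbitrary nonsingular $DF$ (the form $w\mapsto w^T(DF)^{-1}w$ may vanish on a nonzero coordinate vector), so this is precisely the point at which I would verify Remark \ref{rmk:nonsgl} with care before concluding.
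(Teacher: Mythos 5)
Your argument is, step for step, the paper's own proof: the paper likewise computes $\det(D\Phi_\alpha)=\det(DF)\det\bigl(-C_\alpha^T(DF)^{-1}C_\alpha\bigr)$ by the Schur formula, cites Remark \ref{rmk:nonsgl} for nonsingularity of the compression, and concludes via the constant-rank bound (it invokes Theorem \ref{thm:dim1} directly rather than Theorem \ref{thm:fini}; this is immaterial, and your block-elimination version of the Schur step is equivalent to the determinantal one). However, the step you flagged as deserving scrutiny is a genuine gap --- in your proposal and in the paper alike. Remark \ref{rmk:nonsgl} is false as stated: its proof infers $(C_\alpha v)^T A^{-1}(C_\alpha v)\neq 0$ from mere nonsingularity of $A^{-1}$, but the quadratic form of a nonsingular matrix can vanish on nonzero vectors. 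Concretely, take
$$A=DF=\begin{bmatrix} 0 & 1\\ 1 & 0 \end{bmatrix},$$
which is nonsingular with $A^{-1}=A$; for $\alpha=\{1\}$ one gets $C_\alpha^T A^{-1}C_\alpha=(A^{-1})_{11}=0$, a singular $1\times 1$ matrix. Worse, the theorem itself fails under its stated hypothesis: with $F(x)=(x_2,x_1)$, so that $\rank(DF)\equiv 2$ on $\R^2$, a point $x\geq 0$ solves $\PCP(F)$ iff $F(x)=(x_2,x_1)\geq 0$ (automatic) and $\langle F(x),x\rangle=2x_1x_2=0$; hence $\Sol(\R^2_+,F)$ is the union of the two nonnegative coordinate axes, an infinite set of dimension one. (Consistently, on the pseudo-face $x_1>0$, $x_2=0$ the rank of $D\Phi_{\{1\}}$ drops below $n+|\alpha|$, so the constant-rank hypothesis of Theorem \ref{thm:dim1} is not actually met.)

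So your proof does not close, but through no oversight of yours: you located exactly the defective step, and no argument can repair it without strengthening the hypothesis, since the conclusion is false. What is missing is a condition forcing the principal compression of $(DF)^{-1}$ to be nonsingular. For example, if $DF(x)$ is positive definite (not necessarily symmetric) for every $x$, then for $w\neq 0$, writing $u=(DF)^{-1}w\neq 0$, one has $w^T(DF)^{-1}w=u^T(DF)^Tu=u^T(DF)u>0$, so $C_\alpha^T(DF)^{-1}C_\alpha$ is positive definite (as $C_\alpha$ has full column rank by \eqref{cij}) and in particular nonsingular; with that hypothesis your Schur-complement argument, and the paper's, goes through verbatim. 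Mere full rank of $DF$ everywhere is not enough.
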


\begin{proof} Assume that $\rank (DF)=n$ on $\R^{n}$. Let $\alpha \subset [n]$ be given. The pseudo-face $K_{\alpha}$ defined by
	$$K_{\alpha}=\{x\in\R^n: x_i=0, i\in\alpha, x_j<0, j\in [n]\setminus\alpha\}.$$
We will show that	
	$\rank(D\Phi_\alpha(x))=n+|\alpha|$ for every $x\in \R^{n}$. Let $x$ be a arbitrary given. The matrix $DF(x)$ is nonsingular. From \eqref{Dphi} and the Schur determinantal
	formula \cite[Theorem 2.2]{Zhang}, the determinate of $D\Phi_{\alpha}(x)$ is written  as follows :
	\begin{equation}\label{detPhi}
	\det(D\Phi_{\alpha}(x))=\det(DF(x))\det(-C^T_{\alpha}(DF(x))^{-1}C_{\alpha}).
	\end{equation}
	Remark \ref{rmk:nonsgl} says that $C^T_{\alpha}(DF(x))^{-1}C_{\alpha}$ is nonsingular. From \eqref{detPhi}, we conclude that the determinate of $D\Phi_{\alpha}(x)$ is nonzero. 	
	By the sign of $\det(DF(x))$ is nonzero constant for any  $x$, the sign of $\det(D\Phi_{\alpha}(x))$ so is. One concludes that $\rank(D\Phi_\alpha)(x)=n+|\alpha|$. 
	
	Hence, $\PCP(F)$ satisfies the constant rank condition in Theorem \ref{thm:dim1}. The inequality \eqref{dim_Sol} is obtained. It is easy to check that the right hand side of \eqref{dim_Sol} equals $0$. One has $\dim(\Sol(\R^n_+,F))\leq 0$, i.e., the solution set has finitely many points.
\end{proof}

\begin{example}
	Consider the polynomial complementarity problem where the map $F$ given by 
	$$F_1(x_1,x_2)=x_1-1, \ F_2(x_1,x_2)=-x_1^{3}-x_2+1.$$
	The Jacobian of $F$ is defined as follows:
	$$DF(x_1,x_2)=\begin{bmatrix}
	1& 0 \\ 
	-{3}x_1^2 & -1
	\end{bmatrix}.$$
	Clearly, $\rank(DF(x))=2$ for every $x\in \R^2$. The problem $\PCP(F)$ satisfies the assumptions in Theorem \ref{thm:dim2}, then $\Sol(\R^2_+,F)$ has finitely many points. Besides, an easy computation shows that the solution set has two points $$\Sol(\R^2_+,F)=\{(1,0),(0,1)\}.$$
\end{example}

When $F$ is affine, the problem becomes a linear complementarity problem. The following result is a corollary of Theorem \ref{thm:dim2}.

\begin{corollary}%\label{cor:dim2} 
	If the map $F=Mx+q$, where $M\in\R^{n\times n}$ is nonsingular, and $q\in\R^n$, then $\Sol(\R^n_+,F)$ has finite points.
\end{corollary}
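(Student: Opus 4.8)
The plan is to recognize that this statement is an immediate specialization of Theorem \ref{thm:dim2} to the affine case, so the entire task reduces to verifying that the hypothesis $\rank(DF)=n$ on $\R^n$ is automatically met. First I would compute the Jacobian of $F$. Since $F(x)=Mx+q$ is affine, differentiating gives $DF(x)=M$ for every $x\in\R^n$; in particular the Jacobian is the constant matrix $M$ and does not vary over the space. The nonsingularity of $M$ then means precisely that $\rank(DF(x))=\rank(M)=n$ for all $x\in\R^n$, so the constant full-rank condition holds trivially.

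With this verification in hand, I would simply invoke Theorem \ref{thm:dim2} applied to the polynomial complementarity problem $\PCP(F)$: its hypothesis is exactly $\rank(DF)=n$ on $\R^n$, which we have just confirmed, and its conclusion is that $\Sol(\R^n_+,F)$ has finitely many points. Note that $\PVI(\R^n_+,F)$ with $F$ affine is the classical linear complementarity problem, and $\R^n_+$ is the nonnegative orthant treated in the paragraph preceding Theorem \ref{thm:dim2}, so all structural assumptions (the ACQ on $\R^n_+$, the form of $D\Phi_\alpha$) are already in force. There is no genuine obstacle here: the only conceptual point is observing that affinity forces the Jacobian to be globally constant and equal to $M$, after which nonsingularity of $M$ delivers the full-rank hypothesis without any further computation. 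The result is therefore a direct corollary with no essential difficulty beyond this trivial reduction.
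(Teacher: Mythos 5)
Your proposal is correct and follows exactly the paper's own argument: observe that $DF(x)=M$ for all $x$, so nonsingularity of $M$ gives $\rank(DF)=n$ on $\R^n$, and then invoke Theorem \ref{thm:dim2}. Nothing is missing, and no genuinely different route is taken.
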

\begin{proof} Suppose that $F=Mx+q$ where $M$ is nonsingular. If is follows that $\rank(DF)=\rank(M)=n$ on $\R^n$. Theorem \ref{thm:dim2} says that $\Sol(\R^n_+,F)$ has finite points.
\end{proof}

\section{Stationary Points in Polynomial Fractional Optimization}\label{sec:5} We dicuss on dimensions of the sets of stationary points of polynomial fractional optimization problems under the constant rank condition. Consequently, some special results are obtained.

Let $p(x)$ and $q(x)$ be two polynomials in $n$ variables. Assume that $q(x)>0$ on $K$. We consider the quotient function $$f(x)=\frac{p(x)}{q(x)}, \ x\in K.$$
The \textit{polynomial fractional optimization problem} \cite{TTK04} defined by $K$ and $f$ is the following problem:
$$	{\rm minimize} \ f(x) \ {\rm subject \ to } \ x\in K.$$ 
We will respectively write the problem and the set of stationary points  $\OP(K,f)$ and $\Stat(K,f)$. 

For each index subset $\alpha\subset [m]$. Consider the function $\Psi_{\alpha}$ from
$\R^{n}\times \R^{|\alpha|}\times \R^{\ell}$ to $\R^{n+|\alpha|+\ell}$ defined by $$\Psi_{\alpha}(x,\lambda_\alpha,\mu)=\Big( (q\nabla p-p\nabla q)(x)+\displaystyle\sum_{i\in \alpha}\lambda_i\nabla g_i(x)+\displaystyle\sum_{j\in [\ell]}\mu_i\nabla h_j(x),\, g_\alpha(x),\, h(x)\Big) ^T.$$
Because $q\nabla p-p\nabla q$ is a polynomial map, $\Psi_{\alpha}$ so is. Hence, the zero set $\Psi_{\alpha}^{-1}(0)$  is semialgebraic in $\R^{n+|\alpha|+\ell}$. The Jacobian of $\Psi_\alpha$ is determined as follows
\begin{equation*}\label{Dpsi}
D\Psi_\alpha(x,\lambda_{\alpha})=\begin{bmatrix}
Q(x) +\displaystyle\sum_{i\in \alpha}\lambda_i\nabla^2 g_i(x) &  \nabla g_{\alpha}(x) & \nabla h(x)\\ 
\nabla g_{\alpha}(x)^T & 0 & 0\\
\nabla h(x)^T & 0 & 0
\end{bmatrix},
\end{equation*}
where \begin{equation}\label{Qx}
Q(x):=(q\nabla^2p-p\nabla^2q+\nabla^Tp\nabla^Tq-\nabla^Tq\nabla^Tp)(x)
\end{equation} is the Jacobian 
of $q\nabla p-p\nabla q$.

\begin{theorem}\label{dim_pf}
Assume that the ACQ holds on $K$. If the rank of $D\Psi_\alpha$ is constant on $\R^{n}\times \R^{|\alpha|}$, for every $\alpha \subset [m]$, then the dimension of $\Stat(K,f)$ does not excess the following number:
\begin{equation}\label{dim_stat}
\max_{\alpha\subset [m]}\left\lbrace \min\{\dim(K_\alpha), n+|\alpha|+\ell-\rank(D\Psi_\alpha)\}\right\rbrace.
\end{equation}
\end{theorem}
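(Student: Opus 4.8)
The plan is to reduce this statement to Theorem~\ref{thm:dim1} by showing that the set of stationary points of $\OP(K,f)$ is exactly the solution set of a polynomial variational inequality built from $p$ and $q$. First I would differentiate the objective. By the quotient rule,
\begin{equation*}
\nabla f(x)=\frac{q(x)\nabla p(x)-p(x)\nabla q(x)}{q(x)^2},
\end{equation*}
which is well-defined on $K$ because $q(x)>0$ there. Since $K$ is convex, a point $x\in K$ is stationary for the minimization of $f$ over $K$ precisely when $\langle\nabla f(x),y-x\rangle\geq 0$ for all $y\in K$, that is, when $x\in\Sol(K,\nabla f)$.

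The key step is to clear the strictly positive denominator. Since $q(x)^2>0$ for every $x\in K$, the inequality $\langle\nabla f(x),y-x\rangle\geq 0$ for all $y\in K$ is equivalent to $\langle(q\nabla p-p\nabla q)(x),y-x\rangle\geq 0$ for all $y\in K$. Writing $F:=q\nabla p-p\nabla q$, which is a genuine polynomial map, this gives
\begin{equation*}
\Stat(K,f)=\Sol(K,F).
\end{equation*}
The whole problem is thereby transferred from the fractional setting to an ordinary polynomial variational inequality.

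It then remains to match the auxiliary data. With this choice of $F$, the map $\Psi_\alpha$ defined before the theorem is literally the map $\Phi_\alpha$ associated with $\PVI(K,F)$, and the matrix $Q(x)$ in \eqref{Qx} is precisely the Jacobian $DF(x)$; one confirms the latter by differentiating $q\nabla p-p\nabla q$ componentwise. Hence the hypothesis that $\rank(D\Psi_\alpha)$ is constant on $\R^n\times\R^{|\alpha|}$ for every $\alpha\subset[m]$ is identical to the constant-rank hypothesis of Theorem~\ref{thm:dim1} for $F$. Applying that theorem to $\PVI(K,F)$ under the assumed ACQ and substituting $\Sol(K,F)=\Stat(K,f)$ yields the bound \eqref{dim_stat}.

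I expect the only substantive point to be the clearing-of-denominator equivalence together with the justification that stationarity for $\OP(K,f)$ is captured by the variational condition $x\in\Sol(K,\nabla f)$; once this identification is in place the argument reduces to reading off $\Psi_\alpha=\Phi_\alpha$ and invoking Theorem~\ref{thm:dim1} verbatim. The verification that $Q(x)$ equals $DF(x)$ is a routine differentiation and should not present a difficulty.
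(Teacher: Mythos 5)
Your proposal is correct and follows essentially the same route as the paper: identify $\Stat(K,f)$ with $\Sol(K,\nabla f)$, clear the positive denominator $q^2$ to pass to the polynomial map $q\nabla p-p\nabla q$, observe that $\Psi_\alpha$ is the map $\Phi_\alpha$ for this choice of $F$, and invoke Theorem~\ref{thm:dim1}. No gaps; this matches the paper's own proof step for step.
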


\begin{proof} 
The stationary points of $\OP(K,f)$ is the solutions of the variational inequality problem defined by $K$ and $\nabla f$ (see, e.g., \cite[Subsection 1.3.1]{FaPa03}), where $\nabla f$ is the gradient of $f$.

The $i$th-component of the gradient $\nabla f(x)$ obtained by differentiating the quotient with respect to the single real variable $x_i$:
	$$(\nabla f(x))_i=\dfrac{\partial}{\partial x_i}\left(\dfrac{p(x) }{q(x)}\right)=\dfrac{p'_{x_i}q(x)-q'_{x_i}p(x)}{q^2(x)}, \ i\in[n].$$
Hence, one has
\begin{equation}\label{nab_pq}
\nabla f=\dfrac{q\nabla p-p\nabla q}{q^2}.
\end{equation}

We see that $\Stat(K,f)$ is semialgebraic defined by the solutions of polynomial variational inequality $\PVI(K,q\nabla p-p\nabla q)$. Indeed, since $q^2>0$ on $K$, from \eqref{nab_pq} one has $$\left\langle \nabla f(x),y-x\right\rangle \geq 0 \; \text{ iff } \; \left\langle \big( q\nabla p-p\nabla q\big) (x),y-x\right\rangle \geq 0,$$
for all $x,y\in K$. The solution sets of the two variational inequalities $\PVI(K,\nabla f)$ and $\PVI(K,q\nabla p-p\nabla q)$ are coincident. Because $q\nabla p-p\nabla q$ is a polynomial map, $\Stat(K,f)$ is semialgebraic.

The upper bound \eqref{dim_stat} for the dimension of $\Stat(K,f)$ is obtained by applying Theorem \ref{thm:dim1} for $\PVI(K,q\nabla p-p\nabla q)$.
\end{proof}

\begin{example} Consider the unconstrained polynomial fractional optimization problem given by 
	$$f(x_1,x_2)=\dfrac{-1}{x_1^2-2x_1x_2+x_2^2+1}.$$
	The matrix $Q(x)$ (given by \eqref{Qx}) of $f$ is defined as follows:
	$$Q(x)=\begin{bmatrix}
	2& -2 \\ 
	-2 & 2
	\end{bmatrix}.$$
We see that $\rank(Q(x))=1$ for every $x\in \R^2$. The problem $\OP(K,f)$ satisfies the assumptions in Theorem \ref{dim_pf}, then  $\dim(\Stat(K,f))\leq 1$. It is not difficult to shows that $\Stat(K,f)$ is the solution set of $\OP(K,f)$ with 
$$\Stat(K,f)=\{(x_1,x_2)\in\R^2:x_1-x_2=0\},$$
and its dimension is one.
\end{example}
\begin{corollary}
Assume that the ACQ holds on $K$. If the rank of the Jacobian $D\Psi_\alpha$ is $n+\alpha|+\ell$  on $\R^{n}\times \R^{|\alpha|}$, for every $\alpha \subset [m]$, then $\Stat(K,f)$ has finitely many points.
\end{corollary}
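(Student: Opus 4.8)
The plan is to deduce this corollary from Theorem \ref{dim_pf} in precisely the same way that the finiteness result Theorem \ref{thm:fini} is deduced from Theorem \ref{thm:dim1}. First I would record the trivial but essential structural observation that $\Psi_\alpha$ maps $\R^n\times\R^{|\alpha|}\times\R^\ell$ into $\R^{n+|\alpha|+\ell}$, so that its Jacobian $D\Psi_\alpha$ is a \emph{square} matrix of order $n+|\alpha|+\ell$. Consequently the hypothesis $\rank(D\Psi_\alpha)=n+|\alpha|+\ell$ says exactly that $D\Psi_\alpha$ has full rank on all of $\R^n\times\R^{|\alpha|}$.

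Second, a full-rank square Jacobian is in particular of constant rank, so the standing hypothesis of Theorem \ref{dim_pf} is met for every $\alpha\subset[m]$, and that theorem applies. Substituting $\rank(D\Psi_\alpha)=n+|\alpha|+\ell$ into the bound \eqref{dim_stat} makes every difference $n+|\alpha|+\ell-\rank(D\Psi_\alpha)$ vanish, so the term indexed by $\alpha$ collapses to $\min\{\dim(K_\alpha),0\}\leq 0$. Taking the maximum over all $\alpha\subset[m]$ then gives $\dim(\Stat(K,f))\leq 0$, and by the convention recalled in Section \ref{sec:Pre} a nonempty semialgebraic set of dimension $0$ has finitely many points (the empty case being trivial under $\dim(\emptyset)=-\infty$). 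This yields the claim.

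I do not expect a genuine obstacle here: the argument is a direct specialization of Theorem \ref{dim_pf} at the extreme value of the rank. The only points needing care are bookkeeping—confirming that $D\Psi_\alpha$ is square, so that the stated rank really is the maximal possible one, and noting that full rank automatically fulfils the constant-rank requirement. Everything else is inherited from Theorem \ref{dim_pf}, whose proof already reduces $\OP(K,f)$ to the polynomial variational inequality $\PVI(K,q\nabla p-p\nabla q)$ through \eqref{nab_pq}.
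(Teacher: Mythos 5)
Your proposal is correct and follows essentially the same route as the paper: the paper's proof likewise applies Theorem \ref{dim_pf} with $\rank(D\Psi_\alpha)=n+|\alpha|+\ell$ to get $\dim(\Stat(K,f))\leq 0$ and concludes finiteness, exactly mirroring how Theorem \ref{thm:fini} follows from Theorem \ref{thm:dim1}. Your added bookkeeping (that $D\Psi_\alpha$ is square, so full rank is the maximal and in particular a constant rank) is implicit in the paper but harmless and accurate.
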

\begin{proof}
Suppose that $\rank(D\Psi_\alpha)=n+|\alpha|+\ell$ for every $\alpha \subset [m]$. Applying Theorem \ref{dim_pf} we obtain $\dim(\Stat(K,f))\leq 0$. This means that $\Stat(K,f)$ has finitely many points.
\end{proof}

\begin{corollary}%\label{cor:quad}
Assume that the set $K$ is polyheral convex given by \eqref{K_polyh}. If the function $f$ is quadratic, i.e. 
$$f(x)=x^TMx+Bx+c,$$
where $M\in\R^{n\times n}$, $B\in \R^n$, and $c\in \R$, 
then the dimension of $\Stat(K,f)$ does not excess the number given by \eqref{dim_stat}.
\end{corollary}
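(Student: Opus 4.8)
The plan is to recognize the quadratic $f$ as a polynomial fractional function and then apply Theorem \ref{dim_pf}. First I would write $p(x)=x^TMx+Bx+c$ and take $q(x)\equiv 1$, so that $f=p/q$ with $q>0$ everywhere; this casts the problem inside the framework of $\OP(K,f)$. Since $K=\{x\in\R^n:Ax\leq b\}$ is polyhedral convex, every constraint function $g_i(x)=(Ax-b)_i$ is affine and the representation \eqref{K_polyh} carries no equality constraints, so that $\ell=0$. It then remains to verify the two hypotheses of Theorem \ref{dim_pf}: the ACQ on $K$ and the constant rank condition on each $D\Psi_\alpha$.

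Next I would dispose of the constant rank condition by computing $Q(x)$ from \eqref{Qx}. Because $q\equiv 1$ forces $\nabla q=0$ and $\nabla^2 q=0$, the formula collapses to $Q(x)=\nabla^2 p=M+M^T$, a constant matrix that is independent of $x$. Moreover, since each $g_i$ is affine, one has $\nabla^2 g_i=0$ and each gradient $\nabla g_i$ is a constant vector (a row of $A$). Substituting these into the Jacobian $D\Psi_\alpha$, the Hessian terms $\sum_{i\in\alpha}\lambda_i\nabla^2 g_i(x)$ vanish identically and every surviving block is constant. Hence for each $\alpha\subset[m]$ the matrix $D\Psi_\alpha$ depends on neither $x$ nor $\lambda_\alpha$, and a constant matrix trivially has constant rank on $\R^{n}\times\R^{|\alpha|}$.

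Finally, the ACQ on $K$ follows from polyhedral convexity: for a set defined by finitely many affine inequalities the Bouligand--Severi tangent cone and the linearization cone coincide at every point, so $L_K(x)=T_K(x)$ holds throughout $K$. With both hypotheses in hand, Theorem \ref{dim_pf} applies and delivers the bound \eqref{dim_stat} for $\dim(\Stat(K,f))$.

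I expect the genuinely load-bearing step to be the verification of the ACQ, rather than the rank computation. The constant rank condition is essentially automatic here, since casting $f$ in fractional form with $q\equiv 1$ reduces $D\Psi_\alpha$ to a fixed matrix; the only point requiring an external fact is that polyhedral convexity guarantees the ACQ. This is a classical property of sets cut out by affine inequalities (cf.\ \cite{FaPa03}), so once $f$ is placed in the fractional framework no real obstacle remains and the result is an immediate specialization of Theorem \ref{dim_pf}.
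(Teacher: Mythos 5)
Your proof is correct and takes essentially the same route as the paper: with $q\equiv 1$ the matrix $Q(x)$ from \eqref{Qx} reduces to the constant $\nabla^2 p = M+M^T$, the affine constraints make every block of $D\Psi_\alpha$ independent of $x$ and $\lambda_\alpha$, and Theorem \ref{dim_pf} applies. You are in fact slightly more careful than the paper's own proof, which does not mention the ACQ hypothesis at all, whereas you correctly note (and justify via the classical fact that tangent and linearization cones coincide for sets cut out by affine inequalities) that polyhedrality supplies it.
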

\begin{proof}
Because of $f(x)=x^TMx+Bx+c$, the Hessian $\nabla^2f=M+M^T$ does not depend on $x$. The constraint set given by \eqref{K_polyh}, it follows that $\nabla g_i$ also does not depend on $x$. Hence, the constant rank condition in Theorem \ref{dim_pf} is obviously satisfied. The assertion is a corollary of this theorem. 
\end{proof}

\begin{corollary}\label{cor:linfr}
	Assume that $K$ is polyheral convex given by \eqref{K_polyh}. If the function 
	$f$ is linear fractional, i.e.,
	$$f(x)=\dfrac{Ax+b}{Cx+d},$$
	where $A,C\in \R^n$, and $b,d\in \R$,
	then the dimension of $\Stat(K,f)$ does not excess the number given by \eqref{dim_stat}. 
\end{corollary}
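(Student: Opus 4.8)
The plan is to verify the hypotheses of Theorem \ref{dim_pf} for the problem $\OP(K,f)$ with the linear-fractional data $p(x)=Ax+b$ and $q(x)=Cx+d$, and then invoke that theorem directly, in the same spirit as the two preceding corollaries. Theorem \ref{dim_pf} requires two things: that the ACQ holds on $K$, and that $\rank(D\Psi_\alpha)$ is constant on $\R^n\times\R^{|\alpha|}$ for every $\alpha\subset[m]$. Since $K$ is polyhedral as in \eqref{K_polyh}, all of its defining constraints are affine, and for such sets the linearization cone coincides with the tangent cone at every point, so the ACQ holds automatically on $K$. The task therefore reduces to checking the constant-rank hypothesis.

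First I would compute the block $Q(x)$ appearing in $D\Psi_\alpha$, using its definition \eqref{Qx}. Because $p$ and $q$ are affine, their gradients $\nabla p=A$ and $\nabla q=C$ are constant vectors and their Hessians $\nabla^2 p$ and $\nabla^2 q$ vanish identically. Substituting into \eqref{Qx} annihilates the two Hessian terms and leaves only the outer-product contribution, so that $Q(x)=AC^T-CA^T$, a constant (and incidentally skew-symmetric) $n\times n$ matrix that does not depend on $x$. Next, since $K$ is polyhedral the functions $g_i$ are affine, so $\nabla^2 g_i\equiv 0$ and each $\nabla g_i$ is a constant vector, and likewise $\nabla h$ is constant. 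Feeding these facts into the block form of $D\Psi_\alpha$ shows that every entry of $D\Psi_\alpha$ is constant; in particular the matrix depends on neither $x$ nor $\lambda_\alpha$.

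With $D\Psi_\alpha$ a fixed matrix for each $\alpha$, its rank is trivially constant on $\R^n\times\R^{|\alpha|}$, so the constant-rank hypothesis of Theorem \ref{dim_pf} is satisfied for every $\alpha\subset[m]$. Applying that theorem then gives the bound \eqref{dim_stat} on $\dim(\Stat(K,f))$, completing the argument. The only step carrying any content is the observation that $Q(x)$ collapses to a constant matrix: although $f$ is a genuinely nonlinear ratio, the stationarity of $\OP(K,f)$ is governed by the polynomial map $q\nabla p-p\nabla q$, whose Jacobian loses all $x$-dependence precisely because both $p$ and $q$ are affine. I do not expect any real obstacle beyond carrying out this short gradient-and-Hessian computation.
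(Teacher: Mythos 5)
Your proposal is correct and follows essentially the same route as the paper: observe that affinity of $p$, $q$, and the polyhedral constraints makes $Q(x)$ (you get $AC^T-CA^T$, the paper writes $A^TC-AC^T$ under its transpose convention, both constant skew-symmetric) and hence every entry of $D\Psi_\alpha$ constant, so the constant-rank hypothesis of Theorem \ref{dim_pf} holds trivially and the bound \eqref{dim_stat} follows. If anything, your write-up is more complete than the paper's, which omits the explicit verification that the ACQ holds automatically on a polyhedral set and contains a circular reference (its proof cites ``the proof of Corollary \ref{cor:linfr}'' itself, evidently a typo for the preceding quadratic corollary).
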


\begin{proof} Since $f$ is linear fractional, the matrix $Q$ in \eqref{Qx} defined by
	$Q(x)=A^TC-AC^T$ which does not depend on $x$. We can now proceed analogously to the proof of Corollary \ref{cor:linfr}, then the assertion be proved.
\end{proof}
\begin{remark}
Suppose that the function $f$ is convex on $K$. Then the stationary points are solutions of $\OP(K,f)$, hence that all facts in the present section can be applied for the solution set of $\OP(K,f)$.
\end{remark}

\section{A Classification of Polynomial Variational Inequalities}\label{sec:6} In this section, based on the dimensions of solution sets, a classification of the polynomial variational inequalities is shown. The thickness of these classes also is discussed.

Let $d>0$ be given integer. Here, $\Po_{d}$ stands for the linear space of all polynomials of degree at most $d$. The dimension of the space $\Po_{d}$ is denoted and defined by 
$$\rho_{n,d}:=\dim(\Po_{d})=\binom{n+d}{d}.$$ 
Let $X$ be the $\rho_{n,d}$-vector consist of all monomials degree at most $d$ which is listed by the lexicographic ordering \begin{equation*}\label{X_mo}
X := (1,x_1,x_2,\dots,x_n, x_1^2, x_1x_2,\dots,x_1x_n,\dots,x_1^d,x_1^{d-1}x_2,\dots,x_n^d)^T.
\end{equation*}
For every polynomial map $Q=(Q_1,\dots,Q_n)\in\Po^n_d$, there exists a unique matrix  $A\in\R^{n\times \rho_{n,d}}$,
\begin{equation*}\label{A}
A=\begin{bmatrix}
a_{11}& a_{12} & \cdots & a_{1\rho_{n,d}}  \\ 
a_{21}& a_{22} & \cdots & a_{2\rho_{n,d}}  \\ 
&  & \vdots &  \\ 
a_{n1}& a_{n2} & \cdots & a_{n\rho_{n,d}} 
\end{bmatrix},
\end{equation*}
such that $Q(x)=AX$.

For each $k\in\{-\infty\}\cup[d]$, $\D_k$ stands for the set of all matrices $A\in\R^{n\times\rho_{n,d}}$ such that $\dim(\Sol(K,AX))=k$. 

\begin{remark}
Since $\Sol(K,\lambda Q)=\Sol(K,Q)$ for any $\lambda>0$, $\D_k$ is a cone in $\R^{n\times\rho_{n,d}}$ provided that this set is nonempty. Clearly, we have a disjoint decomposition of $\R^{n\times\rho_{n,d}}$ by as follows:
$$\R^{n\times\rho_{n,d}}=\D_{-\infty}\cup \D_{0}\cup \dots\cup \D_{n}.$$
\end{remark}

It is of interest to know how is $\D_{k}$ thick (or big) in $\R^{n\times\rho_{n,d}}$. From \cite[Theorem 8.2]{Hieu19}, we can say that $\D_{-\infty}\cup \D_{0}$ is generic in $\R^{n\times\rho_{n,d}}$, i.e. $\D_{-\infty}\cup \D_{0}$ contains a countable intersection of dense and open sets in $\R^{n\times\rho_{n,d}}$, provided that the constraint set satisfies the LICQ.

The following proposition says that the cone $\D_{n}$ is trivial when the interior of the constraint set is nonempty. 

\begin{proposition}\label{dim_n} 
Assume that	the interior of $K$ is nonempty. The dimension $\Sol(K,F)$ is full if and only if $F$ is the zero polynomial.
\end{proposition}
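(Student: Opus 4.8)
The plan is to prove both directions of the equivalence, where $\dim(\Sol(K,F))$ being \emph{full} means it equals $n$, the dimension of the ambient space (equivalently, of $K$, since $\inte(K)\neq\emptyset$).

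First I would dispatch the easy implication. If $F$ is the zero polynomial, then the variational inequality condition $\langle F(x),y-x\rangle\geq 0$ holds trivially for every $x\in K$ and every $y\in K$, so $\Sol(K,\mathbf{0})=K$. Since $\inte(K)\neq\emptyset$, the set $K$ contains an open ball and hence $\dim(\Sol(K,\mathbf{0}))=\dim(K)=n$, which is full. This direction requires no machinery beyond the definition of the solution set.

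The substantive direction is the converse: assuming $\dim(\Sol(K,F))=n$, I would show $F\equiv 0$. The key step is to exploit that a full-dimensional semialgebraic subset of $\R^n$ must have nonempty interior; in particular $\Sol(K,F)$ must meet $\inte(K)$ in a set of dimension $n$, hence in a nonempty open subset $U\subset\inte(K)$. On this open set I would invoke the characterization already recorded in the excerpt: if $x\in\inte(K)$, then $x\in\Sol(K,F)$ if and only if $F(x)=0$. Therefore $F$ vanishes identically on the open set $U$. Since each component $F_i$ is a polynomial vanishing on a nonempty open subset of $\R^n$, and a nonzero polynomial has a measure-zero (lower-dimensional) zero set, each $F_i$ must be the zero polynomial; hence $F\equiv 0$.

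The main obstacle, and the only point demanding care, is justifying that a full-dimensional solution set actually intersects the interior of $K$ in a full-dimensional piece, rather than being concentrated on the boundary. Here I would use the pseudo-face decomposition \eqref{dimsum1}: the boundary pieces $K_\alpha$ with $\alpha\neq\emptyset$ are cut out by at least one active equality $g_i(x)=0$ and therefore have dimension at most $n-1$, so they cannot contribute dimension $n$. Thus the maximum in \eqref{dimsum1} is attained on the open pseudo-face $K_\emptyset\subset\inte(K)$, forcing $\dim(\Sol(K,F)\cap K_\emptyset)=n$ and in particular $\Sol(K,F)\cap\inte(K)$ to have nonempty interior. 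This localizes the argument cleanly to the interior, where the identity-theorem-for-polynomials step finishes the proof.
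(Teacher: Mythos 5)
Your proof is correct, and its core is the same as the paper's: both directions rest on the observation that a semialgebraic subset of $\R^n$ of dimension $n$ contains a nonempty open set $U$, on the interior characterization ($x\in\inte(K)$ solves the problem iff $F(x)=0$), and on the identity theorem for polynomials. Where you diverge is the final ``localization'' paragraph, and there the comparison favors the paper: once you have a nonempty open $U\subset\Sol(K,F)\subset K$, the inclusion $U\subset\inte(K)$ is automatic, because the interior of $K$ is by definition the largest open set contained in $K$ --- this is exactly the one-line step in the paper's proof, so no pseudo-face analysis is needed to rule out a solution set ``concentrated on the boundary.'' Your detour through \eqref{dimsum1} is moreover the only fragile point of your write-up: the claim that every $K_\alpha$ with $\alpha\neq\emptyset$ has dimension at most $n-1$ tacitly assumes no $g_i$ is the zero polynomial (if $g_i\equiv 0$ then $\{x:g_i(x)=0\}=\R^n$ and the pseudo-face $K_\alpha$ with $i\in\alpha$ can be full-dimensional, while $K_\emptyset=\emptyset$), a nondegeneracy of the description of $K$ that the paper never assumes; the direct argument is immune to this. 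In short: your first two paragraphs already constitute the paper's proof, and the third should simply be deleted.
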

\begin{proof} The nonemptiness of the interior of $K$ implies \cite[Proposition 2.8.4]{BCF98} that $\dim (K)=n$. 
Clearly, if $F$ is the zero polynomial then $\Sol(K,F)=K$, and hence that $\dim(\Sol(K,F))=n$.

Suppose that the dimension $\Sol(K,F)$ is full. By definition of $\dim(\Sol(K,F))$, there is a nonempty open semialgebraic set $U$ such that $U\subset \Sol(K,F)$. Because $U$ is open, $U$ must be contained in the interior of $K$. One has $F(x)=0$ for all $x\in U$. For every $i\in[n]$, the zero set of $F_i(x)$ contains the nonempty open set $U$, hence that $F_i\equiv 0$.
 It follows that $F$ is the zero polynomial.
\end{proof}

To illustrate the thickness of $\D_k$ in $\R^{n\times\rho_{n,d}}$, we investigate a special case of $K$ which is a box in $\R^n$.
	
\begin{theorem}\label{thm:class} Let $\delta>0$ be given. Assume that the constraint set is given by 
$$K=\{(x_1,\dots,x_n)\in\R^n: 0\leq x_i\leq \delta, i\in[n]\}.$$	
Then, for each $k\in \{0,\dots,n\}$, $\D_k$ is nonempty and contains a semialgebraic subset $\E_k\subset \R^{n\times\rho_{n,d}}$ such that  $$\dim (\E_k) = (n-k)\times\rho_{n-k,d}.$$
\end{theorem}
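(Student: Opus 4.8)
The plan is to realize each $\E_k$ as a zero-padded copy of a full-dimensional family of polynomial maps acting only on the last $n-k$ coordinates. Writing $m:=n-k$, I would split $\R^n$ into the blocks $(x_1,\dots,x_k)$ and $(x_{k+1},\dots,x_n)$ and look at those matrices $A$ whose first $k$ rows vanish and whose remaining $m$ rows are supported only on the monomials in $x_{k+1},\dots,x_n$. For such $A$ the map $F=AX$ has $F_1=\dots=F_k\equiv 0$ while $(F_{k+1},\dots,F_n)=BY$ for some $B\in\R^{m\times\rho_{m,d}}$, where $Y$ is the monomial vector of degree at most $d$ in the variables $x_{k+1},\dots,x_n$. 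This identifies the admissible $A$ with an injective linear embedding of $\R^{m\times\rho_{m,d}}$ into $\R^{n\times\rho_{n,d}}$, and $\E_k$ will be the image under this embedding of a suitable full-dimensional set $\mathcal{O}\subset\R^{m\times\rho_{m,d}}$.

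The first step is a coordinatewise decoupling of the box variational inequality. Because $K=[0,\delta]^n$ is a Cartesian product, the requirement $\langle F(x),y-x\rangle\ge 0$ for all $y\in K$ splits across coordinates: for each $i$ it forces $x_i=0$ when $F_i(x)>0$, forces $x_i=\delta$ when $F_i(x)<0$, and imposes nothing when $F_i(x)=0$. With the chosen support of $A$, the first $k$ coordinates are then free in $[0,\delta]$ while the last $m$ coordinates must solve the reduced box problem $\PVI([0,\delta]^m,BY)$; hence $\Sol(K,AX)=[0,\delta]^k\times \Sol([0,\delta]^m,BY)$. Writing $S_0:=\Sol([0,\delta]^m,BY)$ and expressing the right-hand side as the finite union $\bigcup_{p\in S_0}\big([0,\delta]^k\times\{p\}\big)$ of $k$-cubes, equation \eqref{dimsum} yields $\dim\Sol(K,AX)=k$ as soon as $S_0$ is finite and nonempty.

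The second and crucial step is to produce $\mathcal{O}$: a full-dimensional semialgebraic family of $B$ for which $S_0$ is finite and nonempty. Nonemptiness is automatic, since $[0,\delta]^m$ is compact and convex and $BY$ is continuous, so by the classical Hartman--Stampacchia existence theorem \cite{FaPa03} the solution set is never empty. For finiteness I would use the robustness of strong monotonicity: the affine map $y\mapsto y-c$ with $c\in(0,\delta)^m$ has Jacobian the identity, is strongly monotone on the box, and therefore has a unique solution. Since the Jacobian of $y\mapsto BY$ depends continuously (indeed polynomially) on $B$ and $y$, and the box is compact, every $B$ in a sufficiently small open ball $\mathcal{O}$ around the coefficient matrix of $y\mapsto y-c$ keeps the symmetric part of that Jacobian positive definite on $[0,\delta]^m$; such a $B$ is still strongly monotone and has a unique---hence finite---solution \cite{FaPa03}. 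An open ball is semialgebraic and of full dimension $m\times\rho_{m,d}$.

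Finally, I would set $\E_k$ to be the image of $\mathcal{O}$ under the embedding of the first step. Every $A\in\E_k$ then satisfies $\dim\Sol(K,AX)=k$, so $\E_k\subset\D_k$ and in particular $\D_k\neq\emptyset$; and since the embedding is injective and linear it preserves dimension, giving $\dim\E_k=\dim\mathcal{O}=m\times\rho_{m,d}=(n-k)\times\rho_{n-k,d}$. The boundary cases fit this scheme: for $k=n$ one has $m=0$, $F\equiv 0$, $\Sol(K,0)=K$, and $\E_n=\{0\}$ of dimension $0$, while for $k=0$ the set $\mathcal{O}$ is a full-dimensional open subset of the entire coefficient space. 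I expect the only genuine obstacle to be this second step---upgrading the mere genericity of finiteness into an honestly full-dimensional (open) family---which the strong-monotonicity perturbation is designed to settle; the decoupling of the first step and the dimension bookkeeping of the last are then routine.
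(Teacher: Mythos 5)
Your proposal is correct, and it follows the paper's overall architecture --- the zero-padded block embedding of an $(n-k)$-variable coefficient matrix, the product decomposition $\Sol(K,AX)=[0,\delta]^k\times\Sol([0,\delta]^{n-k},BY)$, and the dimension bookkeeping via a dimension-preserving linear embedding are all essentially identical to the paper's proof (the paper treats $k=0$ and $k=n$ separately in exactly the way you do, using Proposition \ref{dim_n} for $k=n$). Where you genuinely diverge is the key step of producing a full-dimensional family of reduced problems with zero-dimensional solution sets: the paper invokes the external genericity result \cite[Theorem 8.2]{Hieu19}, which under the LICQ supplies a \emph{dense and open} semialgebraic set $\E'$ of coefficient matrices with $\dim\Sol(K',A'X')\leq 0$, and then uses compactness of the box to rule out emptiness; you instead construct an explicit open ball $\mathcal{O}$ around the coefficient matrix of the strongly monotone map $y\mapsto y-c$, $c\in(0,\delta)^{n-k}$, and argue that positive definiteness of the symmetric part of the Jacobian persists on the compact box under small coefficient perturbations, so every $B\in\mathcal{O}$ yields a \emph{unique} solution. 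Your route is more elementary and self-contained (it needs only Hartman--Stampacchia existence and uniqueness under strict monotonicity from \cite{FaPa03}, avoiding dependence on the unpublished preprint \cite{Hieu19}), and it delivers singleton solution sets rather than merely finite ones; what it gives up is the stronger structural conclusion implicit in the paper's argument, namely that $\E_0$ (and hence $\E'$) can be taken dense and open in the full coefficient space rather than a small ball --- but since the theorem only demands $\dim(\E_k)=(n-k)\times\rho_{n-k,d}$, an open ball suffices, and your argument is complete as stated.
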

 \begin{proof} By the nonemptiness and the compactness of $K$, $\Sol(K,Q)$ is nonempty for every polynomial map $Q$. This implies that $\D_{-\infty}$ is empty.
 	
 We first consider the case $k=0$. It is easy to check that $\Sol(K,F)=\{(0,\dots,0)\}$ when $F(x)=(1,\dots,1)$. Hence, $\D_{0}$ is nonempty. As $K$ satisfies the LICQ, according to \cite[Theorem 8.2]{Hieu19}, there is a dense and open semialgebraic subset $\E_0\subset \R^{n\times\rho_{n,d}}$ such that $\dim \Sol(K,AX)\leq 0$ for all $A\in\E_0$. Because of $\D_{-\infty}=\emptyset$, one has $\dim (\E_0) = n\times\rho_{n,d}$, and the assertion holds for $k=0$.
 
We second prove the assertion with $k=n$. Since the interior of $K$ is nonempty, Proposition \ref{dim_n} says that $\D_n=\{0\}$. The set $\E_n$ mentioned in the theorem is $\{0\}$ with $\dim(\E_n)=0$.
 	 	
Let $k$ be given with $1\leq k\leq n-1$. We consider the variational inequality problems in $n-k$ variables $(x_1,\dots,x_{n-k})$, where the constraint set $K'$ given by
$$K'=\{(x_1,\dots,x_{n-k})\in\R^{n-k}:0\leq x_i\leq \delta, i\in[n-k]\}.$$
Repeating the argument in the second paragraph, we can assert that there is a dense and open semialgebraic subset $\E'\subset \R^{(n-k)\times\rho_{n-k,d}}$ such that $\dim \Sol(K',A'X')= 0$ for all $A'\in\E'$, where $A'\in \R^{(n-k)\times\rho_{n-k,d}}$ and
$$X' := (1,x_1,x_2,\dots,x_{n-k}, x_1^2, x_1x_2,\dots,x_1x_{n-k},\dots,x_1^d,x_1^{d-1}x_2,\dots,x_{n-k}^d)^T.$$
Suppose that $A'\in\E'$ be given and $Q'(x):=A'X'$. We can choose the polynomial map $Q$ as follows
$$Q_{i}=\left\{\begin{array}{cl}
Q'_i  & \text{ if } \ 1\leq i \leq n-k, \\
0 & \text{ if } \ \text{ otherwise}.
\end{array}\right.$$
It is not difficult to check that $$\Sol(K,Q)=\Sol(K',Q')\times [0,1]^k,$$
i.e., $(\bar x_1,\dots,\bar x_{n-k})$ is a solution of $\PVI(K',Q')$ iff $(\bar x_1,\dots,\bar x_{n-k},x_{n-k+1},\dots,x_n)$ is a solution of $\PVI(K,Q)$ for every $(x_{n-k+1},\dots,x_n)\in [0,1]^k$. Since both solution sets are semialgebraic,
by applying \cite[Proposition 2.8.5]{BCF98}, one has $$\dim(\Sol(K,Q))=\dim(\Sol(K',Q'))+k=k.$$
The set $\E_k$ can be defined as the set of all block matrix $A$, where 
$$A=\begin{bmatrix}
A'& 0 \\ 
0& 0
\end{bmatrix},$$
with $A'\in \E'$. It is clear that  $\dim (\E_k)=\dim (\E')=(n-k)\times\rho_{n-k,d}$.

The proof is complete.
 \end{proof}	

\section{Conclusions}

Since the solution set of a polynomial variational inequality is semialgebraic, its dimension is well-defined. Theorem \ref{thm:dim1} gives a upper bound for the dimension provided that the constant rank condition is satisfied. It is of interest to know whether the condition can be removed, or not. We also interested in a sharp lower bound for that dimension. A classification of polynomial variational inequalities is shown in the last section. Theorem \ref{thm:class} discussed on thickness of these classes in the parametric space of polynomial maps when the constraint set is a box. %Finding new sharper estimates for the thickness is an interesting problem.

%\section*{Acknowledgements}

%======References======%

\end{document}